\def\NAT@def@citea{\def\@citea{\NAT@separator}}
\theoremstyle{plain}
\newtheorem{theorem}{Theorem}[section]
\newtheorem{lemma}[theorem]{Lemma}
\theoremstyle{definition}
\theoremstyle{remark}
\newcommand{\MR}{\mathbb{R}}
\begin{document}


\title{A New Uzawa-exact Type Algorithm for Nonsymmetric Saddle Point Problems}

\author{
\name{Zhitao Xu\textsuperscript{a}\thanks{CONTACT Zhitao Xu. Email: xuzhitao@pku.edu.cn}, Ting Jiang\textsuperscript{a}\thanks{CONTACT Ting Jiang. Email: jiangting@ccbtrust.com.cn} and Li Gao\textsuperscript{a}\thanks{CONTACT Li Gao. Email: gao@math.pku.edu.cn}}
\affil{\textsuperscript{a}School of Mathematical Sciences, Peking University, Beijing, 100871, China}
}

\maketitle

\begin{abstract}
Saddle point problems have been attracting people's attention in recent years. To solve large and sparse saddle point problems, Uzawa type algorithms were proposed. The main contribution of this paper is to present a new Uzawa-exact type algorithm from the aspect of optimization method to solve
nonsymmetric saddle point problems, which often arise from linear variational inequalities and finite element discretization of Navier-Stokes equations. In the paper, convergence of the new algorithm is analysed and numerical experiments are presented.
\end{abstract}

\begin{keywords}
Nonsymmetric saddle point problems; Uzawa type algorithms; Least squares problems; Linear variational inequalities; Navier-Stokes equations
\end{keywords}

\section{Introduction}
\label{sec:intro}
The linear saddle point problem is expressed as
\begin{equation}\label{eq:prob1}
\left(
    \begin{array}{cc}
      A & B_{1}^{T} \\
      B_{2} & -C \\
    \end{array}
  \right)
\left(
         \begin{array}{c}
           x \\
           y \\
         \end{array}
       \right)=\left(
                 \begin{array}{c}
                   f \\
                   h \\
                 \end{array}
               \right),
\end{equation}
where~$A\in \MR^{n\times n}$~,~$B_{1},B_{2} \in \MR^{m\times n}$~,~$C\in \MR^{m\times m}$~, and~$n\geq m$~. The coefficient matrix in (\ref{eq:prob1}) is called the Karush-Kuhn-Tucker (KKT) matrix.

The linear saddle point problems arise from, for example, variational inequalities, quadratic programming problems, finite element discretization of Navier-Stokes equations and Maxwell equations in electromagnetics \cite{taji1993globally,raviart1979finite,karakashian1982galerkin}.

Till now, many methods have been suggested to solve the saddle point problems. They can be subdivided into two broad categories \cite[see][p.29]{benzi2005numerical}: \emph{segregated methods}, which compute
$x$ and $y$ separately, and \emph{coupled methods}, which compute $x$ and $y$ simultaneously. The
main representatives of the segregated methods are the Schur complement reduction methods, the null space methods and the Uzawa type methods. The coupled methods mainly include
the Krylov subspace methods. The Uzawa type methods are
efficient to solve large and sparse problems, which were first suggested in 1958 \cite{arrow1958studies}, and much attention
has been paid since then.

For problem (\ref{eq:prob1}), the phrase {\it symmetric saddle point problem} is used when $A$ is a symmetric positive definite matrix, $B_1=B_2=B$, where $B$ is a full row rank matrix, and $C$ is a
symmetric positive semidefinite matrix, and the phrase {\it nonsymmetric saddle point problem} is used when $A$ is a non-symmetric positive definite matrix, $B_1=B_2=B$, where $B$ is a full row rank matrix, and $C$ is a symmetric positive semidefinite matrix.

The nonsymmetric saddle point problems that this paper mainly concern, often arise in linear variational inequality and certain discretization of Navier-Stokes equations, both of which we will introduce in numerical experiments.

Bramble \cite{bramble2000uzawa} first proposed two Uzawa type algorithms for solving the nonsymmetric saddle point problems with ~$C=0$~, and established their
convergence results. Later on, Cao \cite{cao2004fast} presented a nonlinear Uzawa type algorithm for solving the nonsymmetric saddle point problems with ~$C\neq0$~ and analysed the convergence of the algorithm.

The algorithms suggested by Bramble and Cao are considered from the aspect of numerical linear algebra. In their algorithms, some parameters have to be given by users.
In this paper, we are going to propose a new Uzawa-exact type algorithm from the aspect of optimization and analyse its convergence. The rest of this paper
is organized as follows. In Section \ref{sec:alg}, we describe a new algorithm for the nonsymmetric saddle point problems. In Section \ref{sec:convergence},
convergence results of the new algorithm are presented. And finally, in Section \ref{sec:exp}, numerical results are
reported by solving a set of test problems.

\section{A New Algorithm for the Nonsymmetric Saddle Point Problems}
\label{sec:alg}
In order to describe our new algorithm clearly, we start our brief description from the typical Uzawa algorithm for the symmetric saddle point problems, and then to present our new algorithm for the nonsymmetric saddle point problems.

The saddle point problem we considered here is described as
\begin{equation}\label{eq:symmetric}
\left[
    \begin{array}{cc}
      A & B^{T} \\
      B & -C \\
    \end{array}
\right]
\left[
         \begin{array}{c}
           x \\
           y \\
         \end{array}
\right]
=\left[
       \begin{array}{c}
            f \\
            h \\
       \end{array}
 \right],
\end{equation}
where
$A\in \MR^{n\times n}$ is positive definite,
$B\in \MR^{m\times n}$ is a full row rank matrix and $C\in \MR^{m\times m}$ is symmetric positive semidefinite.
The equation (\ref{eq:symmetric}) can be identically written as
\begin{subequations}\label{eq:symmetric1}
  \begin{align}
        Ax + B^{T}y=f,  \label{eq:symmetric1a}\\
        Bx - Cy = h.     \label{eq:symmetric1b}
  \end{align}
\end{subequations}
Getting $x$ from (\ref{eq:symmetric1a}) and substituting it into (\ref{eq:symmetric1b}), we have
\begin{equation}\label{eq:y1}
(BA^{-1}B^{T}+C)y=BA^{-1}f - h.
\end{equation}
Define
\begin{equation}\label{eq:Sb}
S=BA^{-1}B^{T}+C,\quad b=BA^{-1}f-h,
\end{equation}
then (\ref{eq:y1}) could be written as
\begin{equation}\label{eq:Sy=b}
Sy=b.
\end{equation}

When $A$ is a symmetric positive definite matrix, to solve the equation (\ref{eq:Sy=b}) is identical to solve the quadratic
optimization problem
\begin{equation}\label{eq:quadratic}
\min Q(y)=\frac{1}{2}y^{T}Sy-b^{T}y.
\end{equation}
The gradient of ~$Q(y)$~ is
\begin{equation*}
\nabla Q(y)= Sy-b=  -Bx + Cy + h.
\end{equation*}
By using the steepest descent method to solve the problem (\ref{eq:quadratic}), one avoids solving the equation (\ref{eq:Sy=b}) directly. The
so-called classical Uzawa algorithm is established in this way, and given in Algorithm \ref{alg:Uzawa}.
\begin{algorithm}
\caption{Classical Uzawa Algorithm}
\label{alg:Uzawa}
\begin{algorithmic}
\STATE{Given $x_{0}\in \MR^{n}$, $y_{0}\in \MR^{m}$ and $k:=0$}
\WHILE{a given stopping criterion is not satisfied}
\STATE{Solve $Ax_{k}=f-B^{T}y_{k}$}
\STATE{Set $y_{k+1}=y_{k}+\alpha (Bx_{k}-Cy_{k}-h)$}, where $\alpha>0$ is a stepsize
\STATE{Set $k:=k+1$}
\ENDWHILE
\end{algorithmic}
\end{algorithm}

For the nonsymmetric saddle point problems, Bramble \cite{bramble2000uzawa} first got the result that they are solvable if $A$ is invertible and the Ladyzhenskaya-Babu\v{s}ka-Brezzi condition \cite{brezzi2012mixed} holds, i.e. for some positive $c$, there is
\begin{equation}\label{eq:LBBcondition}
(BA^{-1}_sB^Tv,v)= \sup_{u\in \MR^n}\frac{(v,Bu)^2}{(A_su,u)}\geq c\|v\|^2, \quad \forall v\in \MR^m,
\end{equation}
where $A_s=\frac{1}{2}(A+A^T)$ is the symmetric part of $A$, which is positive definite.
We assume that the nonsymmetric saddle point problems considered here are solvable, and then propose a new algorithm for solving them based on Algorithm \ref{alg:Uzawa}.

The most important difference between the nonsymmetric and symmetric saddle point problems is that, in symmetric problems, $S$ is a symmetric positive definite matrix, so solving the linear system (\ref{eq:Sy=b})
is identical to solving the quadratic minimization problem (\ref{eq:quadratic}), where $S$ and $b$ are defined in (\ref{eq:Sb}). However, in the  nonsymmetric problems, $S$ is a nonsingular nonsymmetric matrix, the identification of the two problems does not exist any more. Therefore we have to find other ways to solve (\ref{eq:Sy=b}).

Our consideration is like this: instead of solving the system $Sy=b$, we solve the least squares problem
\begin{equation}\label{eq:quadratic2}
\min Q(y)=\frac{1}{2}(Sy-b)^{T}(Sy-b),
\end{equation}
where $S$ and $b$ are defined in (\ref{eq:Sb}). Now let us consider how to solve the problem (\ref{eq:quadratic2}). Notice that
the gradient of the objective function in (\ref{eq:quadratic2}) is ~$\nabla Q(y)=S^{T}(Sy-b)$~, the steepest descent method can not be used to solve (\ref{eq:quadratic2}) since we have to calculate $A^{-1}$ twice in every iteration, which
is high-cost. But considering the LBB condition (\ref{eq:LBBcondition}) and the positive semidefinite matrix $C$, we can find that $\forall \nu \neq 0$, there is
\begin{align*}
\nu^{T}S\nu&=\nu^{T}BA^{-1}B^{T}\nu+\nu^{T}C\nu \\
                                          &\geq(B^{T}\nu)^{T}A^{-1}B^{T}\nu \\
                                          &>0,
\end{align*}
that is the matrix $S$ is positive definite. This means if we choose $d=-(Sy-b)$, there is
\[
d^T \nabla Q(y)=-(Sy-b)^{T}S^{T}(Sy-b)<0,
\]
so $d$ is a descent direction at the point $y$. This direction
\begin{equation}\label{eq:direction}d=-(Sy-b)=Bx - Cy - h\end{equation}
could be chosen instead of $-\nabla Q(y)$ to avoid the computation of $A^{-1}$.

In the line search type algorithms of minimization problems, besides the descent direction, a suitable stepsize along the direction has to be considered.
In the classical Uzawa algorithms, how to choose the stepsize $\alpha$ is uncertain. Frequently, it was decided according to the background
of the application problems, or estimated by many numerical experiments. An advantage of our algorithm to solve the problem (\ref{eq:quadratic2}) is
that a proper stepsize could be acquired as an explicit formula along the direction.

For our problem, it is supposed that at the point $(x_{k},y_{k})$, the direction $d_k$ is decided by (\ref{eq:direction}), then we could use the exact line search method to get
the step size $\alpha_k$, which is a solution of the problem
\begin{equation}\label{eq:ELS}
\min\limits_{\alpha} Q(y_{k}+\alpha d_{k}),
\end{equation}
where
$$Q(y_{k}+\alpha d_{k})= \frac{1}{2}[ S(y_{k}+\alpha d_{k})-b]^{T}[ S(y_{k}+\alpha d_{k})-b].$$ By (\ref{eq:direction}),
we can get
\begin{align*}
  Q(y_{k}+\alpha d_{k}) &= \frac{1}{2}(\alpha Sd_k-d_k)^{T}(\alpha Sd_k-d_k)\\
                        &= \frac{1}{2}(d_{k}^{T}S^{T}Sd_{k}\alpha^{2}-2d_{k}^{T}Sd_{k}\alpha+d^{T}_{k}d_{k}).
\end{align*}
Since $d_{k}^{T}S^{T}Sd_{k}\geq 0$, if $d_{k}^{T}S^{T}Sd_{k}=0$, the iteration terminates;
otherwise, we could get the exact line search step size
$$\alpha_k=\frac{d^{T}_{k}Sd_{k}}{d^{T}_{k}S^{T}Sd_{k}}.$$

Our new algorithm is presented as follows.
\begin{algorithm}
\caption{A new Uzawa-exact type Algorithm}
\label{alg:Uzawa1}
\begin{algorithmic}
\STATE{Given $y_{0}\in \MR^{m},\varepsilon > 0$ and $k:=0$, solve $Ax_0=f-B^Ty_0$}
\WHILE{Given stopping criteria are not satisfied}
\STATE{Compute $d_{k}=Bx_{k}-Cy_{k}-h$}
\STATE{Solve $Aq_k=B^Td_k$}
\STATE{Set $p_k=Bq_k+Cd_k$ and $\alpha_{k}=\frac{d_{k}^{T}p_k}{p_k^Tp_k}$}
\STATE{Update $y_{k+1}=y_{k}+\alpha_{k}d_{k}$ and $x_{k+1}=x_{k}-\alpha_kq_k$}
\STATE{Set $k:=k+1$}
\ENDWHILE
\end{algorithmic}
\end{algorithm}

From the update of $x_k$ in Algorithm \ref{alg:Uzawa1}, we can get by induction that for any given $k\geq 0,$
\begin{equation*}
\begin{aligned}
x_{k}=A^{-1}(f-B^Ty_{k}),
\end{aligned}
\end{equation*}
which is the same as the equation (\ref{eq:symmetric1a}).

Since the matrix $A$ is sparse and invertible, in this algorithm, the equation $Aq_k=B^Td_k$ could be
solved by using exact LU factorization. All the other computation only includes multiplications with the matrix $B$ and $C$, which can be easily handled. The stopping criteria of the algorithm will be considered in Section \ref{sec:exp}.

\section{Convergence Analysis of the New Algorithm}
\label{sec:convergence}
In this section, we will analyse the convergence of Algorithm \ref{alg:Uzawa1}. The following lemma \cite{cao2004fast} will be used in the analysis.

\begin{lemma}\label{lemma1}
Suppose that A is positive definite and the stabilizing condition
\begin{equation*}
\exists \beta>0,\quad (S_sv,v)\geq \beta\|v\|^2, \quad \forall v\in \MR^m
\end{equation*}
holds, where $S_s=BA^{-1}_sB^T+C$, then
\begin{equation}\label{eq:caoestimation}
(Sv,v)\geq \gamma^{-2}\beta\|v\|^2, \quad \forall v\in \MR^m
\end{equation}
for all $\gamma,$ where $\gamma\geq \|A\|/\lambda_m$ and $\lambda_m=\min\limits_{\lambda\in\sigma(A_s)}\lambda$.
\end{lemma}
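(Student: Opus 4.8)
The plan is to relate the quadratic form $(Sv,v)$ for the nonsymmetric matrix $S=BA^{-1}B^T+C$ to the quadratic form $(S_sv,v)$ for the symmetrized matrix $S_s=BA_s^{-1}B^T+C$, using that $C$ is symmetric positive semidefinite and that the nonsymmetric piece comes entirely from $A^{-1}$ versus $A_s^{-1}$. First I would observe that $(Sv,v)=(A^{-1}B^Tv,B^Tv)+(Cv,v)$, since $(Cv,v)$ is real and $C$ symmetric, while $(BA^{-1}B^Tv,v)=(A^{-1}B^Tv,B^Tv)$; similarly $(S_sv,v)=(A_s^{-1}B^Tv,B^Tv)+(Cv,v)$. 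So it suffices to compare $(A^{-1}w,w)$ with $(A_s^{-1}w,w)$ for $w=B^Tv$, and in fact to produce the bound $(A^{-1}w,w)\ge \gamma^{-2}(A_s^{-1}w,w)$ for all $w$, after which adding the common term $(Cv,v)\ge 0$ (noting also $\gamma^{-2}\le 1$, so $(Cv,v)\ge\gamma^{-2}(Cv,v)$) yields $(Sv,v)\ge\gamma^{-2}(S_sv,v)\ge\gamma^{-2}\beta\|v\|^2$ via the stabilizing hypothesis.

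The core estimate $(A^{-1}w,w)\ge\gamma^{-2}(A_s^{-1}w,w)$ is what I expect to be the main obstacle, and here is how I would attack it. Write $u=A^{-1}w$, so $w=Au$ and $(A^{-1}w,w)=(u,Au)=(u,A_su)$ because $(u,A_{\mathrm{skew}}u)=0$ for the skew part. On the other hand $(A_s^{-1}w,w)=(A_s^{-1}Au,Au)$. The Cauchy–Schwarz inequality in the inner product induced by the positive definite matrix $A_s^{-1}$ gives, for the pair $Au$ and $A_su$,
\[
(A_s^{-1}Au,A_su)^2\le (A_s^{-1}Au,Au)\,(A_s^{-1}A_su,A_su).
\]
The left side equals $(u,A_su)^2$ since $A_s^{-1}A_s=I$, and the last factor equals $(u,A_su)$. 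Hence $(u,A_su)^2\le (A_s^{-1}Au,Au)(u,A_su)$, i.e. $(u,A_su)\le (A_s^{-1}Au,Au)$ when $(u,A_su)>0$ (the case $u=0$ being trivial). Rearranging,
\[
(A^{-1}w,w)=(u,A_su)\ge \frac{(u,A_su)^2}{(A_s^{-1}Au,Au)} = \frac{(A^{-1}w,w)^2}{(A_s^{-1}w,w)}.
\]
This only gives $(A^{-1}w,w)^2\le (A^{-1}w,w)(A_s^{-1}w,w)$, i.e. $(A^{-1}w,w)\le (A_s^{-1}w,w)$ — the wrong-direction crude bound. To recover the sharp constant $\gamma^{-2}$ I would instead bound $(A_s^{-1}Au,Au)$ from above by $\|A_s^{-1}A\|_{A_s}\cdot(\text{something})$; more precisely, estimate the operator $A_s^{-1}A$ in the $A_s$-inner product, using $\|A_s^{-1}A u\|_{A_s}^2=(A A_s^{-1}A u, u)\le\|A\|\,\|A_s^{-1}Au\|\,\|u\|$ together with $\|A_s^{-1}Au\|\le\lambda_m^{-1}\|Au\|\le\lambda_m^{-1}\|A\|\,\|u\|$ and $\|u\|^2\le\lambda_m^{-1}(A_su,u)$, which chains into $(A_s^{-1}Au,Au)\le(\|A\|/\lambda_m)^2(u,A_su)=\gamma^2(u,A_su)$. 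Substituting this into the Cauchy–Schwarz rearrangement above converts it into $(u,A_su)\ge\gamma^{-2}(A_s^{-1}Au,Au)$, i.e. exactly $(A^{-1}w,w)\ge\gamma^{-2}(A_s^{-1}w,w)$.

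Finally I would assemble the pieces: for arbitrary $v\in\MR^m$ put $w=B^Tv$, combine $(A^{-1}B^Tv,B^Tv)\ge\gamma^{-2}(A_s^{-1}B^Tv,B^Tv)$ with $(Cv,v)\ge\gamma^{-2}(Cv,v)$ (valid since $\gamma\ge\|A\|/\lambda_m\ge 1$ because $\|A\|\ge\|A_s\|\ge\lambda_m$), add them to get $(Sv,v)\ge\gamma^{-2}(S_sv,v)$, and invoke the stabilizing condition $(S_sv,v)\ge\beta\|v\|^2$ to conclude $(Sv,v)\ge\gamma^{-2}\beta\|v\|^2$. The delicate point throughout is tracking that all the nonsymmetry is absorbed into the single scalar $\gamma$ via the operator-norm estimate of $A_s^{-1}A$; the rest is bookkeeping with the Cauchy–Schwarz inequality in the $A_s$-metric.
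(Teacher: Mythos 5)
The paper does not prove this lemma at all: it is imported verbatim from Cao's paper \cite{cao2004fast}, so there is no in-paper proof to compare against. Your argument is correct and is essentially the standard one from that reference: reduce everything to the single estimate $(A^{-1}w,w)\ge\gamma^{-2}(A_s^{-1}w,w)$ by writing $(Sv,v)=(A^{-1}B^Tv,B^Tv)+(Cv,v)$, then prove that estimate by setting $u=A^{-1}w$, using $(A^{-1}w,w)=(u,A_su)=\|u\|_{A_s}^2$ and $(A_s^{-1}w,w)=\|A_s^{-1}Au\|_{A_s}^2$, and bounding the latter by $\gamma^2\|u\|_{A_s}^2$ through the operator norm of $A_s^{-1}A$ in the $A_s$-metric. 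Two small remarks. First, the entire paragraph with the Cauchy--Schwarz inequality in the $A_s^{-1}$-inner product is a dead end, as you yourself note --- it only yields $(A^{-1}w,w)\le(A_s^{-1}w,w)$ --- and it can be deleted; the operator-norm chain alone closes the proof, since $(A_s^{-1}w,w)=(A_s^{-1}Au,Au)\le\gamma^2(u,A_su)=\gamma^2(A^{-1}w,w)$ directly. Second, the identity $\|A_s^{-1}Au\|_{A_s}^2=(AA_s^{-1}Au,u)$ is not right as written (the correct form is $(A^TA_s^{-1}Au,u)=(A_s^{-1}Au,Au)$), but this is only a notational slip: the inequality $\|A_s^{-1}Au\|_{A_s}^2=(A_s^{-1}Au)^T(Au)\le\|A_s^{-1}Au\|\,\|A\|\,\|u\|$ that you actually use follows from the ordinary Cauchy--Schwarz inequality, and the rest of the chain ($\|A_s^{-1}Au\|\le\lambda_m^{-1}\|A\|\,\|u\|$ and $\|u\|^2\le\lambda_m^{-1}(A_su,u)$) is sound, as is the observation that $\gamma\ge 1$ lets you absorb the $(Cv,v)$ term.
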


\begin{theorem}\label{Thm:convergence}
Suppose that A is positive definite and the stabilizing condition
\begin{equation*}
\exists \beta>0,\quad (S_sv,v)\geq \beta\|v\|^2, \quad \forall v\in \MR^m
\end{equation*}
holds. Let $M=S^TS\succ 0$. For any $y_0$, let the sequence $\{y_k\}$ be generated by Algorithm \ref{alg:Uzawa1} and $y^*$ be the solution of the problem (\ref{eq:quadratic2}),
then there is
\begin{equation}\label{eq:thm1}
\frac{Q(y_{k+1})}{Q(y_{k})}=\frac{\|y_{k+1}-y^*\|_M^2}{\|y_{k}-y^*\|_M^2}\leq 1-c_0<1,
\end{equation}
where $c_0$ is a constant, which is independent of k and the condition number of KKT matrix.
\end{theorem}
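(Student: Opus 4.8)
The plan is to exploit the fact that Algorithm \ref{alg:Uzawa1} is exactly the exact-line-search gradient method applied to the strictly convex quadratic $Q(y)=\frac12(Sy-b)^T(Sy-b)=\frac12\|y-y^*\|_M^2$ with $M=S^TS$. The first step is to record the algebraic identities linking the iteration to this quadratic: since $x_k=A^{-1}(f-B^Ty_k)$, the computed direction is $d_k=Bx_k-Cy_k-h=-(Sy_k-b)=-\nabla Q(y_k)$ up to the factor $S^T$; more precisely, writing $r_k=Sy_k-b$ we have $d_k=-r_k$, $p_k=Bq_k+Cd_k=Sd_k=-Sr_k$, and $\nabla Q(y_k)=S^Tr_k$. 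Hence $\alpha_k=\dfrac{d_k^Tp_k}{p_k^Tp_k}=\dfrac{r_k^TS r_k}{r_k^TS^TS r_k}=\dfrac{r_k^TM\,(-d_k)\cdot(\text{scaling})}{\cdots}$, but the cleanest route is to note $\alpha_k d_k = y_{k+1}-y_k$ minimizes $Q(y_k+\alpha d_k)$ over $\alpha$, so the first equality in \eqref{eq:thm1} follows directly from $Q(y)=\frac12\|y-y^*\|_M^2$ and the update formula.

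Next I would establish the exact-line-search decrease estimate. With $g_k:=\nabla Q(y_k)=S^Tr_k$ and search direction $d_k=-r_k$, a one-line computation with the explicit quadratic gives
\begin{equation*}
Q(y_{k+1})=Q(y_k)-\frac{(d_k^Tp_k)^2}{2\,p_k^Tp_k}=Q(y_k)\left(1-\frac{(r_k^TSr_k)^2}{(r_k^Tr_k)(r_k^TS^TSr_k)}\right),
\end{equation*}
using $Q(y_k)=\frac12 r_k^Tr_k$. So it remains to bound the ratio
$$\theta_k:=\frac{(r_k^TSr_k)^2}{(r_k^Tr_k)(r_k^TS^TSr_k)}$$
below by a positive constant $c_0$ independent of $k$. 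For the numerator, $r_k^TSr_k=r_k^TS_sr_k\ge \gamma^{-2}\beta\|r_k\|^2$ by Lemma \ref{lemma1} (here $S_s=BA_s^{-1}B^T+C$ and $\gamma\ge\|A\|/\lambda_m$), so the numerator is at least $\gamma^{-4}\beta^2\|r_k\|^4$. For the denominator, $r_k^TS^TSr_k\le\|S\|^2\|r_k\|^2$. Combining, $\theta_k\ge \gamma^{-4}\beta^2/\|S\|^2=:c_0>0$, which is manifestly independent of $k$; and since $\beta$, $\gamma$, $\|S\|$ depend only on $A$, $B$, $C$ and not on the conditioning of the full KKT matrix, this $c_0$ has the claimed independence. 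This yields $Q(y_{k+1})/Q(y_k)\le 1-c_0<1$.

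The main obstacle is really just the lower bound on the numerator $r_k^TSr_k$: one must observe that, because $S=BA^{-1}B^T+C$ with $A$ nonsymmetric, $r_k^TSr_k$ equals $r_k^T S_s r_k$ only after symmetrizing — i.e. $v^TSv=v^T\frac12(S+S^T)v$, and $\frac12(S+S^T)=\frac12(BA^{-1}B^T+BA^{-T}B^T)+C$, which is \emph{not} equal to $S_s=BA_s^{-1}B^T+C$. This is precisely why Lemma \ref{lemma1} is needed: it converts the stabilizing hypothesis on $S_s$ into the coercivity estimate \eqref{eq:caoestimation} for $S$ itself, giving $v^TSv=v^T\tfrac12(S+S^T)v\ge\gamma^{-2}\beta\|v\|^2$ for all $v$. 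Everything else — the equivalence of $Q$ with the $M$-norm, the exact-step formula, and the Cauchy–Schwarz-type bound on the denominator — is routine. I would also remark that $c_0<1$ automatically, since $\theta_k\le1$ by Cauchy–Schwarz, so $1-c_0\ge0$ and the stated strict inequality $1-c_0<1$ holds as $c_0>0$.
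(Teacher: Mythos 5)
Your proposal is correct and follows essentially the same route as the paper: reduce to the exact-line-search decrease identity $Q(y_{k+1})/Q(y_k)=1-\frac{(d_k^TSd_k)^2}{(d_k^Td_k)(d_k^TS^TSd_k)}$ (which the paper imports from Saad's Theorem 5.3 and you derive directly), then bound the numerator below via Lemma \ref{lemma1} and the denominator above by $\|S\|^2\|d_k\|^2$ to get $c_0=\beta^2/(\gamma^4\|S\|^2)$. Your explicit remark that the symmetric part of $S$ is not $S_s$, which is exactly why Lemma \ref{lemma1} is needed, is a point the paper leaves implicit but is a welcome clarification.
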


\begin{proof}
Since $y^{*}$ is a solution of the problem (\ref{eq:quadratic2}) and $S$ is a invertible square matrix, $y^{*}$ satisfies
$$Sy^{*}=b.$$
Then we obtain
\begin{equation*}
\frac{1}{2}\|y_{k}-y^*\|_M^2=\frac{1}{2}(Sy_k-Sy^*)^T(Sy_k-Sy^*)=Q(y_k)-Q(y^*)=Q(y_k).
\end{equation*}
Considering $d_k=-(Sy_k-b)$ and the exact line search step size $$\alpha_k=\frac{d_k^TSd_k}{d_k^TS^TSd_k},$$
and using the similar arguments from Theorem 5.3 of Y Saad's book \cite{saad2003iterative}, we get
\begin{equation}\label{eq:prove1}
\begin{aligned}
\frac{\|y_{k+1}-y^*\|_M^2}{\|y_{k}-y^*\|_M^2}=1-\frac{(d_k^TSd_k)^2}{(d_k^TS^TSd_k)(d_k^Td_k)}\in [0,1].
\end{aligned}
\end{equation}
To estimate the lower bound of the right side of the above equation, according to Lemma \ref{lemma1} and Cauchy-Schwarz inequality, there is
\begin{equation*}
\begin{aligned}
1\geq\frac{(d_k^TSd_k)^2}{(d_k^TS^TSd_k)(d_k^Td_k)}&=\frac{(Sd_k,d_k)^2}{\|Sd_k\|^2\|d_k\|^2}\geq \frac{\gamma^{-4}\beta^2\|d_k\|^2}{\|Sd_k\|^2}\\
&\geq \frac{\beta^2}{\gamma^4\|S\|^2},\\
\end{aligned}
\end{equation*}
where $\gamma$ is a parameter with
$$\gamma\geq \|A\|/\lambda_m,\mbox{ÆäÖÐ}\lambda_m=\min\limits_{\lambda\in\sigma(A_s)}\lambda,$$
and $\beta$ satisfies
$$\beta\leq \min_{\lambda\in\sigma(S_s)}\lambda.$$
Let $c_0=\frac{\beta^2}{\gamma^4\|S\|^2}\in (0,1],$ then from (\ref{eq:prove1}) and Lemma \ref{lemma1} we get (\ref{eq:thm1}).
Therefore, $c_0$ is independent of k and the condition number of KKT matrix, which concludes the proof.
\end{proof}

In this theorem, the convergence of the sequence $\{y_{k}\}$ is concerned. If we consider the iterative errors for both $\{y_{k}\}$ and $\{x_{k}\}$, we can get the following result.

\begin{theorem}
Suppose that A is positive definite and the stabilizing condition
\begin{equation*}
\exists \beta>0,\quad (S_sv,v)\geq \beta\|v\|^2, \quad \forall v\in \MR^m
\end{equation*}
holds. For any $y_0$, let the sequence $\{(x_k,y_k)\}$ be generated by Algorithm \ref{alg:Uzawa1}, $y^*$ be the solution of the problem (\ref{eq:quadratic2}), and $x^*$ be the corresponding solution of the problem (\ref{eq:symmetric}).
Define the errors $e_{k}^x=x_{k}-x^{*}$, $e_{k}^y=y_{k}-y^{*}$, then
\begin{equation}
\|Be_{k}^x-Ce_{k}^y\| \leq\|Be_0^x-Ce_0^y\|(\sqrt{1-c_0})^k,
\end{equation}
where $c_0$ is a constant, which is independent of k and the condition number of KKT matrix.
\end{theorem}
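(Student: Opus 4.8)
The plan is to reduce the second theorem to the first one by recognizing that the quantity $Be_k^x - Ce_k^y$ is, up to sign, exactly the residual $d_k$ evaluated at the error, which in turn is $-(Sy_k - b) = -S e_k^y$. First I would record the identity $x_k = A^{-1}(f - B^T y_k)$ (noted just after Algorithm \ref{alg:Uzawa1}) together with $x^* = A^{-1}(f - B^T y^*)$, so that $e_k^x = -A^{-1}B^T e_k^y$. Substituting this into $Be_k^x - Ce_k^y$ gives
\begin{equation*}
Be_k^x - Ce_k^y = -BA^{-1}B^T e_k^y - C e_k^y = -(BA^{-1}B^T + C)e_k^y = -S e_k^y .
\end{equation*}
Hence $\|Be_k^x - Ce_k^y\| = \|S e_k^y\| = \|S(y_k - y^*)\|$.

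Next I would connect this norm to the $M$-norm used in Theorem \ref{Thm:convergence}. Since $M = S^T S$, we have $\|y_k - y^*\|_M^2 = (y_k-y^*)^T S^T S (y_k - y^*) = \|S(y_k-y^*)\|^2$, so in fact $\|Be_k^x - Ce_k^y\| = \|y_k - y^*\|_M$. Theorem \ref{Thm:convergence} gives $\|y_{k+1}-y^*\|_M^2 \le (1-c_0)\|y_k - y^*\|_M^2$, and iterating this bound from $k$ down to $0$ yields $\|y_k - y^*\|_M \le (\sqrt{1-c_0})^k \|y_0 - y^*\|_M$. Translating both sides back through the identity just established gives exactly
\begin{equation*}
\|Be_k^x - Ce_k^y\| \le (\sqrt{1-c_0})^k \|Be_0^x - Ce_0^y\|,
\end{equation*}
with the same constant $c_0$, whose independence of $k$ and of the KKT condition number is already asserted in Theorem \ref{Thm:convergence}.

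The only genuine step requiring care is the very first one: verifying that $x^*$ truly satisfies $x^* = A^{-1}(f - B^T y^*)$, i.e. that the pair $(x^*, y^*)$ recovered from the least-squares solution $y^*$ of \eqref{eq:quadratic2} actually solves the original system \eqref{eq:symmetric}. This follows because $S$ is nonsingular (shown in Section \ref{sec:alg}), so the least-squares problem \eqref{eq:quadratic2} has the unique minimizer $y^*$ with $Sy^* = b$, and then setting $x^* = A^{-1}(f - B^T y^*)$ reconstructs \eqref{eq:symmetric1a} by construction and \eqref{eq:symmetric1b} from $Sy^* = b$ via the derivation of \eqref{eq:y1}. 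Everything else is the bookkeeping above; I do not anticipate any analytic obstacle beyond making this consistency explicit.
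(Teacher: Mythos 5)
Your proposal is correct, and it takes a genuinely shorter route than the paper. You reduce the second theorem to Theorem \ref{Thm:convergence} outright: from $x_k=A^{-1}(f-B^Ty_k)$ and $Ax^*+B^Ty^*=f$ you get $e_k^x=-A^{-1}B^Te_k^y$, hence $Be_k^x-Ce_k^y=-Se_k^y$, and since $M=S^TS$ gives $\|e_k^y\|_M=\|Se_k^y\|$, the claimed inequality is literally Theorem \ref{Thm:convergence} iterated $k$ times and rewritten. The paper instead re-derives everything from scratch: it tracks the increments $e_{k+1}^x-e_k^x$ and $e_{k+1}^y-e_k^y$ separately from the update rules, combines them into the one-step recursion $Be_{k+1}^x-Ce_{k+1}^y=(I-\alpha_kS)(Be_k^x-Ce_k^y)$, identifies $Be_k^x-Ce_k^y=d_k$, and then recomputes the same ratio $1-\frac{(d_k^TSd_k)^2}{(d_k^TS^TSd_k)(d_k^Td_k)}\le 1-c_0$ that already appeared in the proof of Theorem \ref{Thm:convergence}. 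Your version buys economy and makes transparent that the two theorems are the same contraction estimate viewed through the isometry $v\mapsto Sv$ between $\|\cdot\|_M$ and the Euclidean norm; the paper's version is self-contained and exhibits the error-propagation operator $I-\alpha_kS$ explicitly, which is the more traditional way such results are presented in the Uzawa literature. Your closing remark about verifying $Sy^*=b$ (via nonsingularity of $S$) is exactly the consistency check needed, and the paper makes the same observation at the start of the proof of Theorem \ref{Thm:convergence}.
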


\begin{proof}
Since $x_{k}=(A^{T}A)^{-1}(A^{T}f-A^{T}B^{T}y_{k})=A^{-1}(f-B^{T}y_{k})$ and $y_{k+1}=y_{k}+\alpha_k(Bx_{k}-Cy_{k}-h)$,
we have
\begin{align*}
 x_{k+1}-x_{k} &=A^{-1}B^{T}(y_{k}-y_{k+1}) \\
         &=-\alpha_k A^{-1}B^{T}(Bx_{k}-Cy_{k}-h).
\end{align*}
Notice that $x_{k+1}-x_{k}=(x_{k+1}-x^{*})-(x_{k}-x^{*})=e_{k+1}^x-e_{k}^x$ and $(x^{*},y^{*})$ is a solution of the problem (\ref{eq:symmetric}), so
\begin{align*}
e_{k+1}^x-e_{k}^x &=-\alpha_k A^{-1}B^{T}(Bx_{k}-Cy_{k}-h) - \alpha_k A^{-1}B^{T}(Bx^{*}-Cy^{*}-h)\\
         &=-\alpha_k A^{-1}B^{T}(Be_{k}^x-Ce_{k}^y),
\end{align*}
such that
\begin{equation}\label{eq:e1}
B(e_{k+1}^x-e_{k}^x)=-\alpha_k BA^{-1}B^{T}(Be_{k}^x-Ce_{k}^y).
\end{equation}
Similarly, we can get
\begin{equation}\label{eq:e2}
y_{k+1}-y_{k}=\alpha_k (Be_{k}^x-Ce_{k}^y),
\end{equation}
and
\begin{equation}\label{eq:e3}
C(e_{k+1}^y-e_{k}^y)=\alpha_k C(Be_{k}^x-Ce_{k}^y).
\end{equation}
From (\ref{eq:e1}) and (\ref{eq:e3}), we obtain
 \begin{align*}
 B(e_{k+1}^x-e_k^x) - C(e_{k+1}^y-e_{k}^y)=-\alpha_k (BA^{-1}B^{T} + C)(Be_{k}^x-Ce_{k}^y),
\end{align*}
so
\begin{equation*}
Be_{k+1}^x - Ce_{k+1}^y  = (I -\alpha_k S)(Be_{k}^x-Ce_{k}^y).
\end{equation*}
Then we have
\begin{equation}\label{eq:e4}
\|Be_{k+1}^x-Ce_{k+1}^y\|^2=[(I-\alpha_kS)(Be_k^x-Ce_k^y)]^T[(I-\alpha_kS)(Be_k^x-Ce_k^y)].
\end{equation}
From the equation (\ref{eq:e2}), we can get
\begin{equation*}
Be_k^x-Ce_k^y=\frac{y_{k+1}-y_k}{\alpha_k}=d_k,
\end{equation*}
therefore
\begin{equation}\label{eq:e5}
\|Be_k^x-Ce_k^y\|^2=d_k^Td_k.
\end{equation}
Combining (\ref{eq:e4}) and (\ref{eq:e5}), we obtain
\begin{equation*}
\begin{aligned}
\frac{\|Be_{k+1}^x-Ce_{k+1}^y\|^2}{\|Be_k^x-Ce_k^y\|^2}&=\frac{d_k^Td_k-2\alpha_kd_k^TSd_k+\alpha_k^2d_k^TS^TSd_k}{d_k^Td_k}.
\end{aligned}
\end{equation*}
Substituting the the exact line search step size $$\alpha_k=\frac{d_k^TSd_k}{d_k^TS^TSd_k}$$
into the above equation, we have
\begin{equation*}
\begin{aligned}
\frac{\|Be_{k+1}^x-Ce_{k+1}^y\|^2}{\|Be_k^x-Ce_k^y\|^2}=1-\frac{(d_k^TSd_k)^2}{(d_k^TS^TSd_k)(d_k^Td_k)}\leq 1-c_0\in[0,1),
\end{aligned}
\end{equation*}
where $c_0$ is the same as that in Theorem \ref{Thm:convergence}. Therefore
\begin{equation*}
\|Be_k^x-Ce_k^y\|\leq\|Be_0^x-Ce_0^y\|(\sqrt{1-c_0})^k,
\end{equation*}
the proof is completed.
\end{proof}

\section{Numerical Experiments}
\label{sec:exp}
In this section, we consider the numerical behavior of Algorithm \ref{alg:Uzawa1} on solving nonsymmetric saddle point problems. We choose two different testing problems. One is linear variational inequalities, which are closely related with optimization problems. The other is finite element approximations of Navier-Stokes equations, which are popular applications of nonsymmetric saddle point problems. In the following two experiments, all computations are performed in MATLAB on a Intel Core i5 PC computer.
\subsection{Linear Variational Inequality}
\label{sec:exp1}
Let $D$ be a closed convex subset of $\MR^n$ and $F(x)$ be a continuous mapping from $\MR^n$ to $\MR^n$. The variational inequality (VI) problems can be stated as
\begin{equation}\label{eq:VI}
    \mbox{Find}~~x^*\in D\quad\mbox{such that} \quad \langle F(x^*),x-x^* \rangle \geq 0 \quad \forall x\in D,
\end{equation}
where $\langle\cdot,\cdot\rangle$ denotes the inner product in $\MR^n$. When $F$ is affine, i.e. $F(x)=Ax-f$ with $A\in\MR^{n\times n}$ and $f\in\MR^n$, we say the VI problem (\ref{eq:VI}) is a \emph{linear variational inequality} problem.

Linear variational inequality problem has been widely used to formulate and study various equilibrium models in the fields of economics, transportation and regional sciences \cite{florian1989mathematical,dafermos1980traffic,nagurney1989general}.

When the coefficient matrix $A$ is positive definite, Mancino and Stampacchia \cite{mancino1972convex} proved that solving the problem (\ref{eq:VI}) is equivalent to solving the quadratic programming problem with constraints
\begin{equation}\label{eq:qp}
\min\limits_{x}~\frac{1}{2}x^{T}Ax-f^{T}x\quad \mbox{s.t.}\quad x\in D.
\end{equation}
If $D$ is polyhedral, i.e. $D=\{x\in\MR^n|Bx=h\}$ with $B\in \MR^{m\times n}$, the KKT point $(x,\lambda)$ of the problem (\ref{eq:qp}) satisfies
    \begin{equation*}
    \begin{bmatrix}
      A & B^{T} \\
      B & 0 \\
    \end{bmatrix}
\left[
         \begin{array}{c}
           x \\
           -\lambda \\
         \end{array}
\right]
=\left[
       \begin{array}{c}
            f \\
            h \\
       \end{array}
 \right],
    \end{equation*}
where $\lambda$ is the Lagrangian multiplier. This is the problem (\ref{eq:symmetric}) with $C=0$.

As an application of Algorithm \ref{alg:Uzawa1} on variational inequalities, we generate nonsymmetric positive definite matrix $A$ with different dimensions. More precisely, we fix $m=n/2$ and generate the matrix $B$, the vectors $f,h$, the initial points by the random function \emph{rand} and \emph{randn} in MATLAB. In the algorithm, the stopping criteria are used as $\frac{\|r_{k}\|}{\|r_0\|}<10^{-6}$ and $|\frac{\|r_{k}\|}{\|r_0\|}-\frac{\|r_{k}\|}{\|r_0\|}|<10^{-7}$, where
$
r_k=\left(
    \begin{array}{cc}
      Ax_k+B^Ty_k-f\\
      Bx_k-Cy_k-h\\
    \end{array}
  \right).
$
The maximum iteration number as a safeguard against an infinite loop is 2000.

Table \ref{table:resultforVI} presents the results of Algorithm \ref{alg:Uzawa1} on linear variational inequality problems. The left half of Table \ref{table:resultforVI} shows the sizes of the problems, the condition numbers of the matrix A (cond(A)) and the condition numbers of KKT matrix (cond(KKT)). The right half of Table \ref{table:resultforVI} shows the $\infty$-norm of the residuals of the solutions, the iteration numbers (Ite) and the CPU time (in seconds) required to solve the problems. From Table \ref{table:resultforVI}, we can see that all of the linear variational inequality problems could be solved within 400 iterations.

\subsection{Navier-Stokes equation}
\label{sec:exp2}
For a more practical application, we consider the numerical behavior of Algorithm \ref{alg:Uzawa1} on solving nonsymmetric saddle point problems arising from finite element
approximations of the steady-state Navier-Stokes equations. The model of the problem is shown as follows,
\begin{equation}\label{eq:Navier}
    \begin{cases}
        -\nu\Delta u + u\cdot \nabla u +\nabla p = f \;in \;\Omega,  \\
        \nabla\cdot u = 0 \; in \; \Omega,
    \end{cases}
\end{equation}
where $\Omega\in \MR^2$ is a bounded domain, $u$ is a vector valued function representing the fluid velocity, and $\nu$ is the
kinematic viscosity of the flow. The scalar function p is the fluid pressure.

First we consider how to generate our problems. The IFISS software library \cite{silvester2012ifiss} is ¡°open-source¡± and is written in MATLAB. The software package of it
is  for the interactive numerical study of incompressible flow problems. It has two important components,
one of which concerns problem specification and finite element discretization. We use this component of IFISS to get nonsymmetric saddle point problems
by finite elements discretization of the steady-state Navier-Stokes equations.

IFISS contains a number of built-in model problems, from which we choose the following
four examples of Navier-Stokes problems with $Q1$-$P0$ element \cite{elman2014finite} in our experiments
\begin{itemize}
  \item Exa 1: Channel domain,
  \item Exa 2: Flow over a backward facing step,
  \item Exa 3: Lid driven cavity,
  \item Exa 4: Flow in a symmetric step channel.
\end{itemize}

The driver \emph{navier\_testproblem} is used to generate our problems. For each example, we choose four different sets of parameters, which means we get four problems from one example. The parameters are given in Table \ref{table:para}, where stab stands for stabilization.

From these 16 problems, we choose problem 4-1 to show its finite element subdivision in Fig.~\ref{pic:fe}, non-zero elements distribution of matrix $A$ and $A-A^T$ in Fig.~\ref{pic:nz}, where nz represents the number of non-zero elements.

Algorithm \ref{alg:Uzawa1} is used to solve these 16 problems. The computing platform and the stopping criteria are the same as the numerical experiment in Section \ref{sec:exp1}, and the starting points are generated by the random function
\emph{rand} in MATLAB.

Table \ref{table:scale} shows the sizes, condition numbers of our problems and the performances of Algorithm \ref{alg:Uzawa1}, where we can find that Exa 3 is very ill-conditioned. From the table, we can see that all of the problems could be solved in 1200 iterations with required accuracy.

Fig.~\ref{pic:exa-1}, \ref{pic:exa-2}, \ref{pic:exa-3} and \ref{pic:exa-4} indicate the performance of the new algorithm in the precision of iterations, where the ordinate axis indicating the logarithm of $\|r_k\|/\|r_0\|$. From those figures and Table \ref{table:scale}, we can see that
\begin{itemize}
\item The algorithm is monotonic, and can converge to a high precision solution for all types of Navier-Stokes equations, i.e. around $10^{-7}$ in the sense of residuals and $10^{-6}$ in the sense of residual ratios.
\item The algorithm is low time consuming, and is not sensitive to the size of the problems, at least in the range we used: $(n,m)=(5890,2816)\sim(n,m)=(8450,4096)$.
\item The algorithm is not sensitive to the ill-conditioning of the problems. Since the condition number of Exa 3 is large, from our results, we can not see its influence on CPU time and precision of the solutions.
\end{itemize}

\section{Conclusions}
\label{sec:con}
In this paper, we propose a new Uzawa-exact type algorithm for the nonsymmetric saddle point problems. The algorithm transforms the original system to a least squares problem. A special descent direction with the exact line search stepsize is chosen to solve the problem. The numerical experiments show that the proposed algorithm is simple and efficient for solving the large-scale nonsymmetric saddle point problems, which arise from linear variational equality problems and Navier-Stokes equations by mixed finite element discretization.

\newpage
\begin{table}[htpb]
  \tbl{The scales of linear VI problems and the performances of Algorithm \ref{alg:Uzawa1}}
    {\begin{tabular}{cccc||ccc} \toprule
           $n$ & $m$& cond(A) & cond(KKT)& $\|r\|_{\infty}$ & Ite  & CPU\\ \midrule
       $1000$ & $500 $& $3357.0 $  &   $1.71\times10^{6}$ &$8.16\times10^{-6}$&$298 $ & $1.314 $ \\ \midrule
       $3000$ & $1500$& $9882.7 $  &   $1.46\times10^{7}$ &$8.42\times10^{-6}$&$334 $ & $11.606 $\\ \midrule
       $5000$ & $2500$& $17214.2$  &   $3.77\times10^{7}$ &$7.12\times10^{-6}$&$246 $ & $24.073 $ \\\midrule
       $7000$ & $3500$& $25343.5$  &   $6.98\times10^{7}$ &$7.71\times10^{-6}$&$348 $ & $71.876 $\\ \midrule
       $10000$ & $5000$& $35454.6$  &   $1.47\times10^{8}$ &$7.46\times10^{-6}$&$335 $ & $228.819$ \\ \bottomrule
    \end{tabular}}
  \label{table:resultforVI}
\end{table}
\begin{table}[hbt]
  \tbl{Parameters chosen in four examples}
    {\begin{tabular}{cccc||cccc}
      \toprule
        & grid  &$\nu$  &stab & & grid  &$\nu$  &stab
         \\ \midrule
       Prob $1-1$& $64\times64$ & $0.01$& $0.25$ & Prob $2-1$& $32\times96$ & $0.02$& $0.25$\\
       Prob $1-2$& $64\times64$ & $0.1$& $1$     & Prob $2-2$& $32\times96$ & $0.2$&  $1$\\
       Prob $1-3$& $64\times64$ & $0.1$& $0.25$  & Prob $2-3$& $32\times96$ & $0.1$& $0.25$\\
       Prob $1-4$& $64\times64$ & $0.05$& $0.25$ & Prob $2-4$& $32\times96$ & $0.05$& $0.25$\\ \midrule
       Prob $3-1$& $64\times64$ & $0.01$& $0.25$ & Prob $4-1$& $32\times96$ & $0.02$ &$0.25$\\
       Prob $3-2$& $64\times64$ & $0.1$& $0.25$  & Prob $4-2$& $32\times96$ & $0.2$  &$1$\\
       Prob $3-3$& $64\times64$ & $0.05$  &$0.5$ & Prob $4-3$& $32\times96$ & $0.5$  &$0.25$\\
       Prob $3-4$& $64\times64$ & $0.02$& $0.25$ & Prob $4-4$& $32\times96$ & $0.01$ &$0.25$\\ \bottomrule
  \end{tabular}}
  \label{table:para}
\end{table}

\begin{table}[htpb]
  \centering
  \tbl{The scales of Navier-Stokes equations and the performances of Algorithm \ref{alg:Uzawa1}}
    {\begin{tabular}{cccccccc}
      \toprule
          & $n$ & $m$& cond(A) & cond(KKT)& $\|r\|_{\infty}$ & Ite  & CPU\\ \midrule
       Prob $1-1$&$8450$ & $4096$& $3208.2$  &   $6.85\times10^{3}$ &$9.59\times10^{-8}$&$582 $ & $2.464$ \\
       Prob $1-2$&$8450$ & $4096$& $1781.4$  &   $4.71\times10^{3}$ &$2.91\times10^{-8}$&$346 $ & $1.548$ \\
       Prob $1-3$&$8450$ & $4096$& $1781.4$  &   $4.78\times10^{3}$ &$1.39\times10^{-8}$&$148 $ & $0.747$ \\
       Prob $1-4$&$8450$ & $4096$& $2061.7$  &   $4.09\times10^{3}$ &$2.59\times10^{-8}$&$214 $ & $0.987$\\ \midrule
       Prob $2-1$&$5890$ & $2816$& $2749.4$  &   $2.98\times10^{3}$ &$2.02\times10^{-7}$&$805 $ & $1.483$ \\
       Prob $2-2$&$5890$ & $2816$& $1453.1$  &   $5.95\times10^{3}$ &$3.58\times10^{-8}$&$933 $ & $1.752$\\
       Prob $2-3$&$5890$ & $2816$& $5089.5$  &   $3.49\times10^{3}$ &$3.68\times10^{-8}$&$355 $ & $0.676 $ \\
       Prob $2-4$&$5890$ & $2816$& $25898.9$  &   $7.83\times10^{3}$ &$1.54\times10^{-7}$&$193 $ & $0.381$ \\ \midrule
       Prob $3-1$&$8450$ & $4096$& $3162.7$ &   $2.85\times10^{16}$&$9.04\times10^{-8}$&$881 $ & $3.602 $  \\
       Prob $3-2$&$8450$ & $4096$& $1664.1$  &   $9.96\times10^{18}$&$6.64\times10^{-9}$&$55  $ & $0.382 $ \\
       Prob $3-3$&$8450$ & $4096$& $1982.1$  &   $1.42\times10^{19}$&$2.04\times10^{-8}$&$182 $ & $0.883 $ \\
       Prob $3-4$&$8450$ & $4096$& $2552.8$ &   $1.08\times10^{18}$&$5.10\times10^{-7}$&$425 $ & $1.802 $ \\ \midrule
       Prob $4-1$&$5890$ & $2816$& $2880.8$  &   $2.89\times10^{3}$ &$1.92\times10^{-7}$&$787 $ & $1.467$ \\
       Prob $4-2$&$5890$ & $2816$& $646.4 $  &   $5.42\times10^{3}$ &$1.47\times10^{-8}$&$409 $ & $0.793$ \\
       Prob $4-3$&$5890$ & $2816$& $667.6 $  &   $2.51\times10^{4}$ &$5.97\times10^{-9}$&$458 $ & $0.871$ \\
       Prob $4-4$&$5890$ & $2816$& $3991.1$  &   $3.88\times10^{3}$ &$3.83\times10^{-7}$&$1185$ & $2.170$ \\ \bottomrule
    \end{tabular}}
    \label{table:scale}
\end{table}

\begin{figure}
\centering
\subfloat[Finite element subdivision]{%
\resizebox*{7cm}{!}{\includegraphics{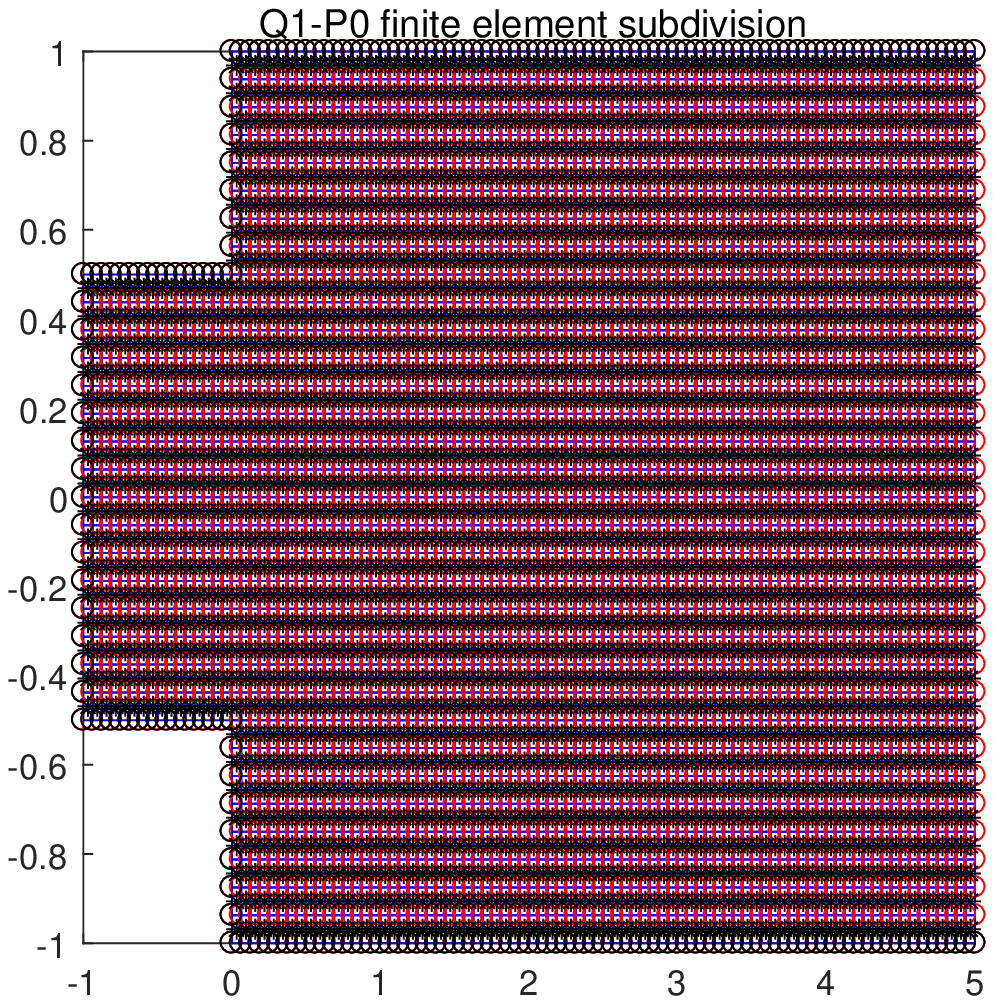}}
\label{pic:fe}
}
\subfloat[Non-zero elements distribution]{%
\resizebox*{7cm}{!}{\includegraphics{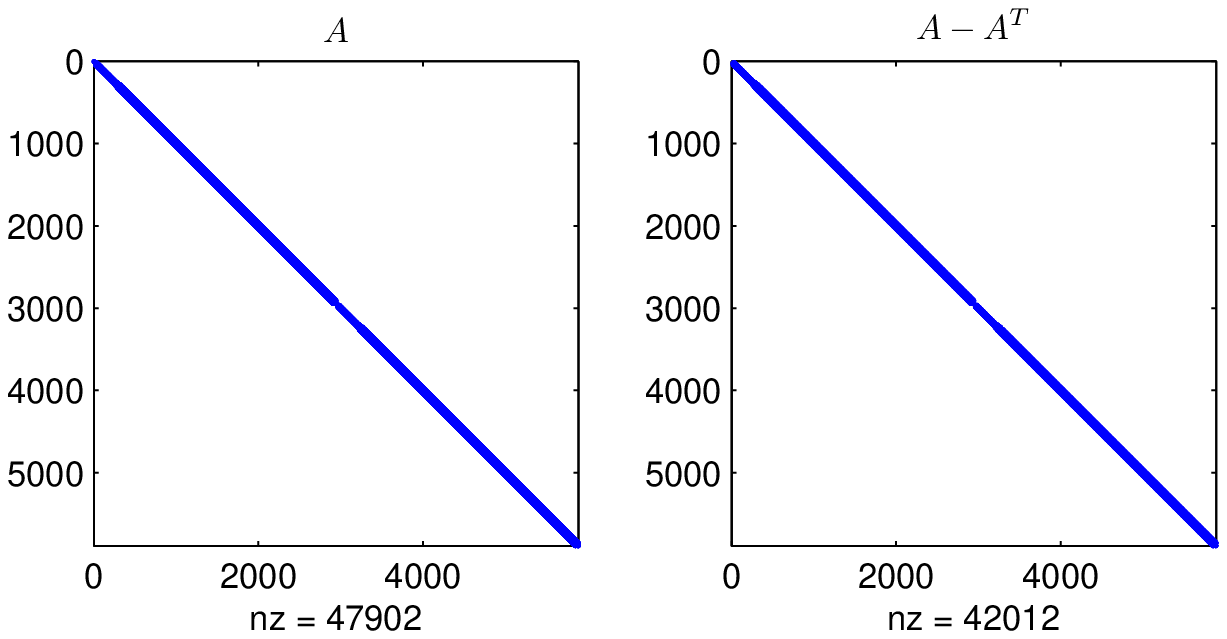}}
\label{pic:nz}
}
\caption{The finite element approximations of Problem 4-1}
\label{pic:pro}
\end{figure}

\begin{figure}
\centering
\subfloat[Performance of the algorithm to Exa 1]{%
\resizebox*{7cm}{!}{\includegraphics{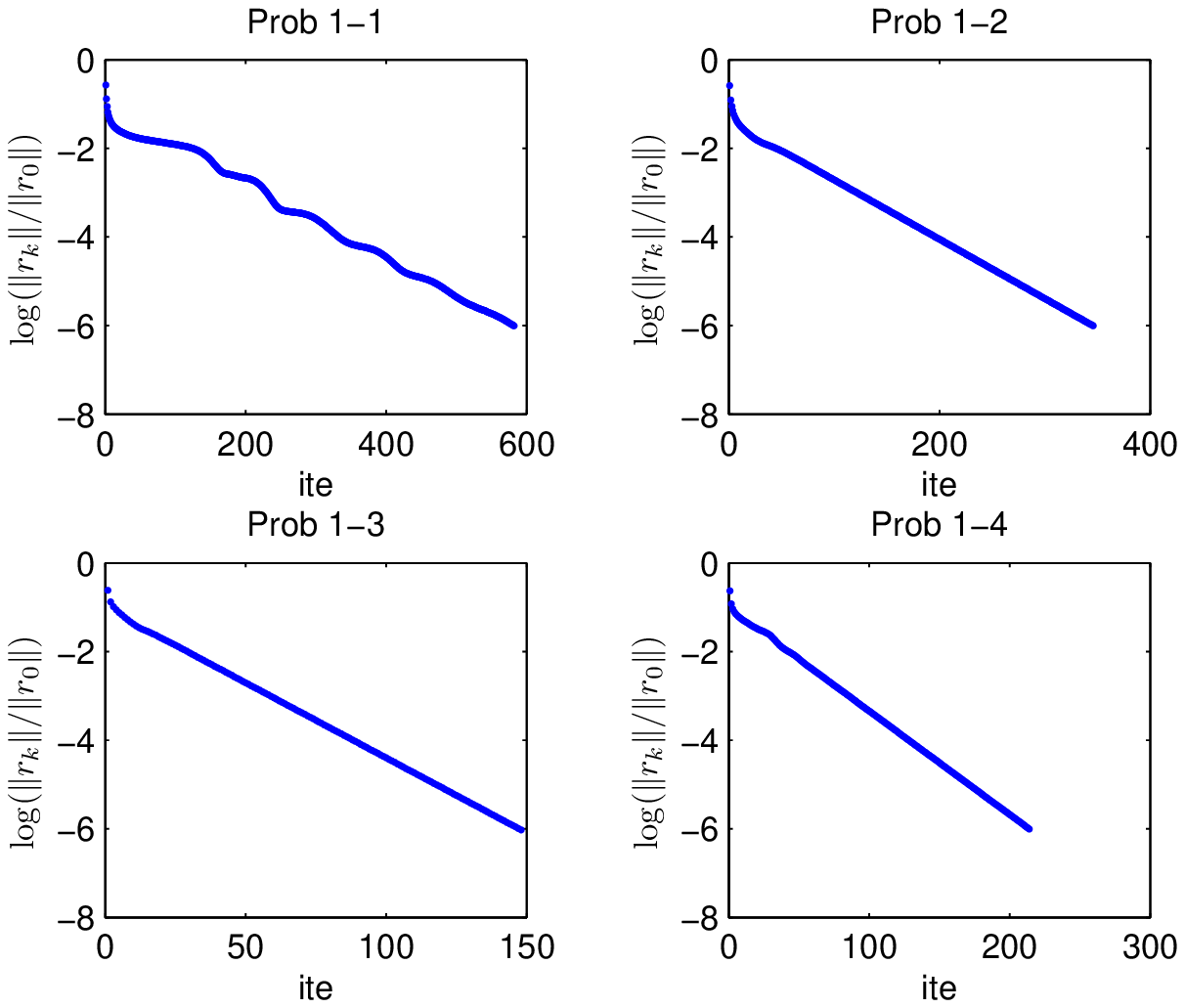}}
\label{pic:exa-1}
}
\subfloat[Performance of the algorithm to Exa 2]{%
\resizebox*{7cm}{!}{\includegraphics{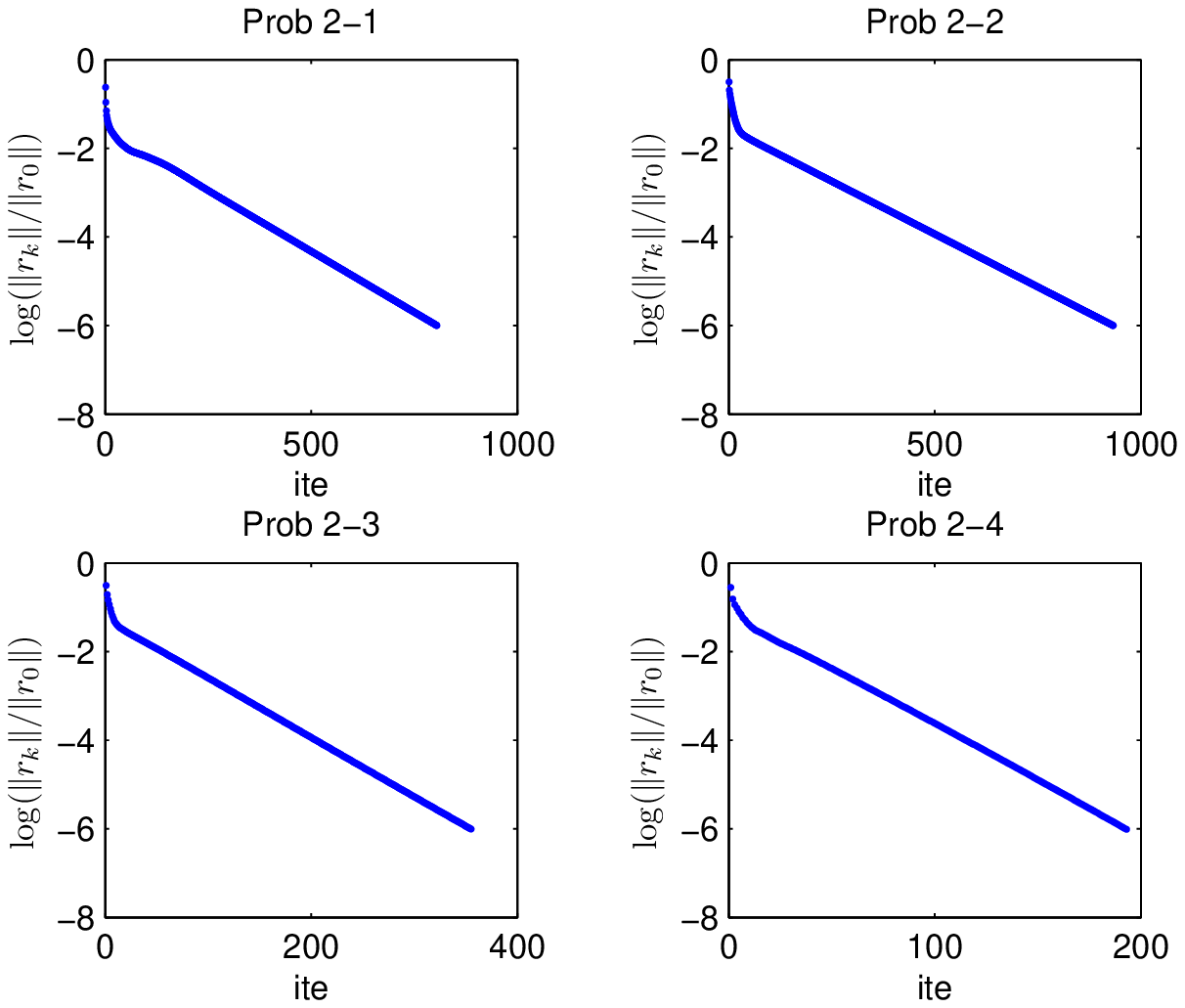}}
\label{pic:exa-2}
}
\caption{The performance of Algorithm \ref{alg:Uzawa1} on Navier-Stokes equations}
\label{pic:test1}
\end{figure}

\begin{figure}
\centering
\subfloat[Performance of the algorithm to Exa 3]{%
\resizebox*{7cm}{!}{\includegraphics{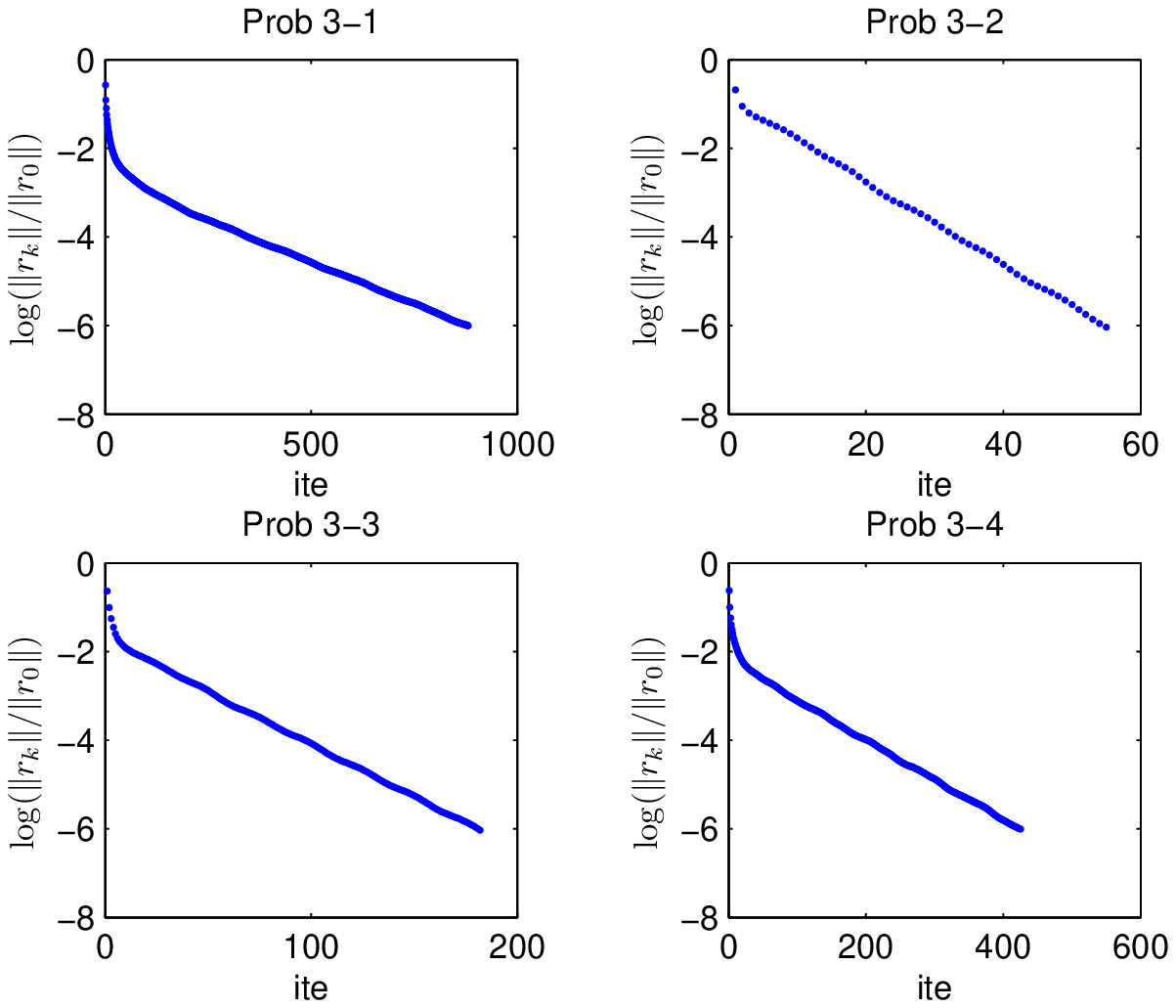}}
\label{pic:exa-3}
}
\subfloat[Performance of the algorithm to Exa 4]{%
\resizebox*{7cm}{!}{\includegraphics{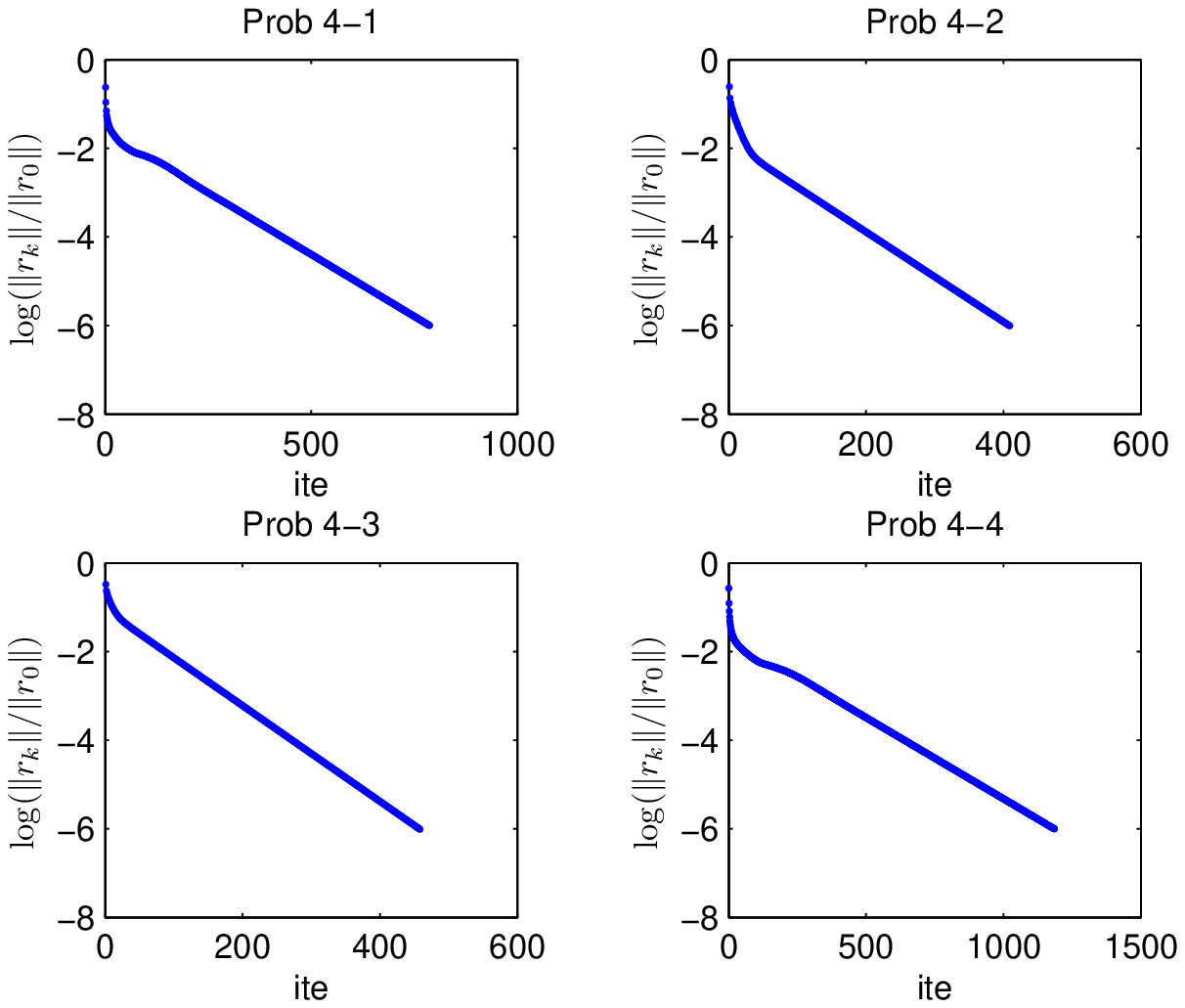}}
\label{pic:exa-4}
}
\caption{The performance of Algorithm \ref{alg:Uzawa1} on Navier-Stokes equations}
\label{pic:test2}
\end{figure}

\end{document}